\newcommand\vpil[1]{\overset{\leftarrow}{#1}}
\newcommand\hpil[1]{\overset{\rightarrow}{#1}}
\def\nto{\mathrel{\nrightarrow}}
\newtheorem{theorem}{Theorem}[section]
\newtheorem{lemma}[theorem]{Lemma}
\newtheorem{corollary}[theorem]{Corollary}
\newtheorem{remark}[theorem]{Remark}
\begin{document}

\title{A note on correlations in \\ randomly oriented graphs}

\author{Svante Linusson}  \thanks{Svante Linusson is a Royal Swedish Academy of Sciences Research Fellow supported by a grant from the Knut and Alice Wallenberg Foundation. I thank Institut Mittag-Leffler for its hospitality.}
\address{Department of Mathematics, KTH-Royal Institute of Technology, 
  SE-100 44, Stockholm, Sweden.}

\email{linusson@math.kth.se}

\date{May 20, 2009}

\begin{abstract}
Given a graph $G$, we consider the model where $G$ is given a random orientation by giving each edge a random direction. It is proven that for $a,b,s\in V(G)$, the events $\{s\to a\}$  and $\{s\to b\}$  are positively correlated. This correlation persists, perhaps unexpectedly, also if we first condition on $\{s\nto t\}$  for any vertex $t\neq s$. 
With this conditioning it is also true that $\{s\to b\}$  and $\{a\to t\}$  are negatively correlated.

A concept of increasing events in random orientations is defined and a general inequality corresponding to Harris inequality is given.

The results are obtained by combining a very useful lemma by Colin McDiarmid which relates random orientations with edge percolation, with results by van den Berg, H\"aggstr\"om, Kahn on correlation inequalities for edge percolation. 

The results are true also for another model of randomly directed graphs.
\end{abstract}

\maketitle

\section{Introduction} \label{S:Intro}
For a given randomly directed graph and two vertices $a,s$ let $\{s\to a\}$ denote the event that there exists a directed path from $s$ to $a$. A natural question to ask is whether the existence of directed paths between various pairs of vertices are positively or negatively correlated. For example, if $a,b,s$ are vertices of the graph, Corollary \ref{C:Twopaths} states that $\{s\to a\}$ and $\{s\to b\}$ are positively correlated, which means that

\begin{equation} \label{e:Twopaths}
 P_O(s\to a)P_O(s\to b) \le P_O(s\to a,s\to b).
\end{equation}

The model $O$ means that we have oriented each edge of the graph $G$ independently with probability $1/2$ for each direction. In this note we discuss correlation inequalities in this model. Questions of this type have been studied for a long time in the context of edge percolation in undirected graphs. The purpose of this note is to draw attention to the possibility to use the model $O$ to prove results for edge percolation and vice versa. It follows from results by Colin McDiarmid \cite{CM} on so called clutter percolation. It gives for instance equality for the probability distributions of the open cluster around a vertex in edge percolation and out-cluster, i.e. the set of reachable nodes, in model $O$, see Lemma \ref{L:equal1}. An extension to the clusters around two vertices is given in Lemma \ref{L:equal2}. Theorem \ref{T:OrientedHarris} is an analog to Harris inequality for random orientations of graphs. 

The positive correlation in \eqref{e:Twopaths} is intuitively easy to grasp. Much less intuitively clear is the fact that this correlation persists also if we first condition on  $\{s\nto t\}$ for any vertex $t\neq s$, that is

\begin{equation} \label{e:vdBK}
P_O(s\to a | s\nto t)P_O(s\to b | s\nto t) \le P_O(s\to a,s\to b | s\nto t),
\end{equation}
given as Corollary \ref{C:vdBK}. The following negative correlation inequality should be more intuitively clear

\begin{equation} \label{e:vdBHK}
P_O(a\to t | s\nto t)P_O(s\to b | s\nto t) \ge P_O(a\to t,s\to b | s\nto t),
\end{equation}
which is given as Theorem \ref{T:vdBHK2}. These inequalities are transfered from edge percolation results 
in \cite{vdBK} and \cite{vdBHK} using the lemmas of Section \ref{S:Lemmas}.

\medskip
The proper definitions of the three models involved are as follows. $G=(V,E)$ is a fixed (undirected) graph.

\noindent\textbf{Model $E^p$} (Edge percolation): Let $0\le p\le 1$. Every edge in $G$ exists with probability $p$ independently of the other edges.

\smallskip
For a vertex $v\in V(G)$, let $C_v(G)\subset V(G)$ be the (random) cluster around $v$, i.e. all vertices $u$ for which there is a path of existing edges between $v$ and $u$. The $G$ will be dropped if it is clear which graph is considered. Note that this notation differs from what seems to be customary, since here $C_v$ is a set of vertices, and not a set of edges.

\medskip
\noindent\textbf{Model $O$} (Random orientation): Every edge in $G$ is directed either way with probability $1/2$ independent of the other edges. 

\smallskip
Model $O$ is one natural way to obtain a randomly directed graph. It has been considered previously in for instance \cite{CM,GG01,SL1}. Another possibility is the following.

\medskip
\noindent\textbf{Model $D^p$} (Directed edge percolation): Every edge in $G$ is replaced by two edges with opposite directions. Then each of the directed edges will exist with probability $p$ independent of all other edges.  

In both models $O$ and $D^p$ we let for a vertex $v\in V(G)$ the {\bf out-cluster} $\hpil{C}_v(G)\subset V(G)$ be the (random) set of all vertices $u$ for which there is a directed path from $v$ to $u$. Let also the {\bf in-cluster} $\vpil{C}_v(G)\subset V(G)$ be the (random) set of all vertices $u$ for which there is a directed path from $u$ to $v$. Note that by convention $v\in \vpil{C}_v(G)\cap \hpil{C}_v(G)$.

\begin{remark} 
I was originally inspired by the so called bunkbed conjecture due to Kasteleyn, Remark 5 in \cite{vdBK}. For any finite graph $G$ let the bunkbed graph be $\tilde{G}=G\times K_2$, where $K_2$ is the graph on two vertices $0,1$ with one edge. The conjecture states that for any vertices $u,v\in V(G)$,  
$P_{E^p}\left( (v,0)\in C_{(u,0)(\tilde{G})}\right) \ge P_{E^p}\left( (v,1)\in C_{(u,0)(\tilde{G})}\right)$ for any $0\le p\le 1$. Lemma \ref{L:equal1} gives an equivalent inequality for random orientations of $\tilde G$, but I was not able to use that to prove the conjecture.
See also \cite{SL1}, where it is proved for outer planar graphs and 
H\"aggstr\"om \cite{OH2}, who coined the term bunkbed conjecture.
\end{remark} 

\medskip

A natural question is to study also the events $\{a \to s\}$ and $\{s \to b\}$. My original intuition suggested to me that in model $O$ they are always negatively correlated. However, Sven Erick Alm \cite{AL1} found that a simple counterexample is the graph on four vertices with all edges except $ab$. This question is  studied further in \cite{AL1}, \cite{AL2}. Note that in the model $D^p$ they are always positively correlated by Harris inequality, see Theorem \ref{T:Harris} below.

\medskip
{\bf Acknowledgement:} I thank Sven Erick Alm and Olle H\"aggstr\"om for very valuable discussions and Jeff Kahn for pointing out the reference \cite{CM}.

\section{The main lemmas} \label{S:Lemmas}

For this section we fix the notation $q:=1-p$. The following lemma is a special case of the theory on clutter percolation by McDiarmid \cite{CM}.
 
\begin{lemma} \label{L:equal1}
For any (locally finite) graph $G$ and any vertex $u\in V(G)$ and finite set $U\subset V(G)$, $u\in U$, we have
\[ P_{E^{1/2}}(C_u=U)=P_O(\hpil{C}_u=U)=P_{D^{1/2}} (\hpil{C}_u=U) \text{ and }
\]
\[ P_{E^{p}}(C_u=U)=P_{D^{p}}(\hpil{C}_u=U), \quad \text {for any $0\le p\le 1$}.
\]
\end{lemma}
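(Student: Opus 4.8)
The plan is to prove all of the stated identities at once through a single \emph{exploration coupling} between the cluster process and the out-cluster process. Fix the vertex $u$ and a linear order on $E(G)$. I describe a search that, given a configuration (a percolation configuration in $E^p$, a set of realized arcs in $D^p$, or an orientation in $O$), computes the relevant cluster by revealing one coordinate of the configuration at a time. Maintain a set $R$ of reached vertices, initialized to $R=\{u\}$, and repeatedly pick the least (in the fixed order) not-yet-examined edge $\{x,y\}$ with exactly one endpoint, say $x$, in $R$. For edge percolation we reveal whether $\{x,y\}$ is open; for $D^p$ we reveal whether the arc $x\to y$ is present; for $O$ we reveal whether the orientation points $x\to y$. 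In every case, if the answer is positive we add $y$ to $R$, and in every case we mark $\{x,y\}$ examined. The search halts when no boundary edge remains unexamined; since $u\in U$, $U$ is finite, and $G$ is locally finite, on the event that the cluster equals $U$ only finitely many edges are ever examined.

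The key structural observation is that, in all three models, this search examines each edge \emph{at most once} and queries only a single Bernoulli coordinate per edge. For $D^p$ this holds because, when $\{x,y\}$ is first examined (from the $R$-side vertex $x$), we consult only the forward arc $x\to y$; the reverse arc $y\to x$ is never inspected, since once both endpoints lie in $R$ the edge is already marked. For $O$ the single orientation bit of $\{x,y\}$ is revealed once, in the direction from the reached endpoint to the unreached one, the complementary outcome being recorded as ``not extending.'' A short induction on path length shows the search is correct, so that $R=C_u$ in $E^p$ and $R=\hpil{C}_u$ in $D^p$ and in $O$: if $v$ is reachable along $u=v_0\to v_1\to\cdots\to v_k=v$ and $v_i\in R$ at termination, then $v_{i+1}\in R$ as well, for otherwise the forward edge $\{v_i,v_{i+1}\}$ would still be an unexamined boundary edge, which the halting rule forbids.

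It remains to read off the marginals and couple. In the searches for $E^p$ and for $D^p$ the revealed coordinate is Bernoulli($p$); in the searches for $O$ and for $D^{1/2}$ it is Bernoulli($1/2$). Since the dynamics of the search (which edge is examined next, and whether $R$ grows) depend only on the sequence of revealed bits together with the fixed edge order, the final set $R$ is \emph{the same deterministic function} of an i.i.d.\ Bernoulli sequence in each model. Feeding a common Bernoulli($p$) sequence to the $E^p$ and $D^p$ searches yields $C_u=\hpil{C}_u$ pointwise, hence $P_{E^p}(C_u=U)=P_{D^p}(\hpil{C}_u=U)$; specializing to $p=1/2$ and feeding a common Bernoulli($1/2$) sequence to all three searches gives $P_{E^{1/2}}(C_u=U)=P_O(\hpil{C}_u=U)=P_{D^{1/2}}(\hpil{C}_u=U)$. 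The main obstacle, and the point deserving the most care, is precisely the one-bit-per-edge claim in model $O$: one must verify that the out-cluster never forces us to inspect an edge's orientation ``from the wrong side,'' i.e.\ in a direction whose bit has already been spent, so that the single orientations really do behave like independent fair coins matched to edge percolation.
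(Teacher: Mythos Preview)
Your argument is correct. The exploration you describe is the standard one: at each step only a single fresh Bernoulli coordinate is consumed, it is Bernoulli($p$) in $E^p$ and $D^p$ and Bernoulli($1/2$) in $O$, and the dynamics depend only on the revealed bit sequence, so the final reached set has the same law across models with matching $p$. One small point worth tightening is the correctness clause: when $v_i\in R$ and $v_{i+1}\notin R$ at termination, the edge $\{v_i,v_{i+1}\}$ is a boundary edge and hence (by the halting rule) \emph{examined}; you should then note that it was necessarily examined from the $v_i$ side (since $R$ is monotone and $v_{i+1}\notin R$ at termination), whereupon presence of the arc $v_i\to v_{i+1}$ forces $v_{i+1}\in R$. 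This completes the induction cleanly.

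Your route is genuinely different from the paper's. The paper proves the identities by induction on $|U|$: removing a vertex $v\in U\setminus\{u\}$ and summing over the possible sub-clusters $U_1$ of $u$ in $G\setminus\{v\}$, it shows that $P(C_u=U)$ (resp.\ $P(\hpil{C}_u=U)$) satisfies the \emph{same} recursion in all three models, with the same base case $q^{\deg(u)}$. What your approach buys is an explicit coupling under which $C_u$ and $\hpil{C}_u$ are \emph{equal} pointwise, which is stronger than equality of marginals for each fixed $U$ and is essentially McDiarmid's original clutter-percolation viewpoint. What the paper's recursive approach buys is an immediate template for the two-cluster extension (Lemma~\ref{L:equal2}): the same vertex-deletion recursion applies verbatim to the pair $(C_u,C_w)$ once one conditions on disjointness, whereas your exploration would need an additional argument to handle edges lying simultaneously on the outward boundary of $U$ and the inward boundary of $W$.
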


Even though this lemma is in my opinion very striking and surprising at first sight, it seems not to have received enough recognition and not been used as much as one could expect. The identity of the cluster distribution for models $O$ and $E^{1/2}$ were used in for instance \cite{SL1} and \cite{GG} on page 369. We give a short proof which we will generalize below. 

\begin{proof}
The proof uses induction over $|U|=k$ and we prove that the probabilities can be computed using the same recursion. If $k=1$ we have $P_{O}(\hpil{C}_u=U)=\frac{1}{2}^{\deg(u)}$
and $P_{E^{p}}(C_u=U)=P_{D^{p}}(\hpil{C}_u=U)=q^{\deg(u)}$.

For the inductive step, assume $|U|\ge 2$ and take a vertex $v\in U, v\neq u$. 
First consider the model $E^p$. If, for a given graph with $C_u(G)=U$, we remove $v$ and all its edges,  we will get $C_u(G\setminus {v})=U_1$ for some set $U_1\subseteq U\setminus \{v\}$. Let $r$ be the number of edges between $U_1$ and $v$ in $G$. Note that at least one of these edges must exist and that there must be paths from $v$ to $U\setminus U_1$ in $G\setminus U_1$. 
We get the recursion
\[P_{E^p}\big(C_u(G)=U\big)=\sum_{U_1:u\in U_1\subseteq U\setminus \{v\}}P_{E^p}\big(C_u(G\setminus \{v\})=U_1\big)
\cdot (1-q^r)\cdot P_{E^p}\big(C_v(G\setminus U_1)=U\setminus U_1\big).
\]
If we do the same reasoning for the model $D^p$ (and $O$) we get exactly the same recursion, but the argument that at least one edge must exist is replaced with at least one of the edges directed from $U_1$ to $v$ must exist, (at least one edge must be directed from $U_1$ to $v$ in model $O$, where also $q=1/2$). The lemma follows.
\end{proof}

The following extension of Lemma \ref{L:equal1} is easy to prove with the same method, but has to the best of my knowledge not been considered before.
\begin{lemma} \label{L:equal2}
For any (locally finite) graph $G$ and any vertices $u,w\in V(G)$ and finite sets $U,W\subset V(G)$, $u\in U, w\in W, U\cap W=\emptyset$, we have
\[ P_{E^{1/2}}(C_u=U, C_w=W)=P_O(\hpil{C}_u=U, \vpil{C}_w=W)=P_{D^{1/2}}(\hpil{C}_u=U, \vpil{C}_w=W) \text{ and }
\]
\[ P_{E^{p}}(C_u=U, C_w=W)=P_{D^{p}}(\hpil{C}_u=U, \vpil{C}_w=W), \quad \text {for any $0\le p\le 1$}.
\]
\end{lemma}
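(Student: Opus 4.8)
The plan is to imitate the inductive recursion used to prove Lemma~\ref{L:equal1}, now running the induction on $|U|+|W|$ and carrying the second cluster along while the first is taken apart. Since $U\cap W=\emptyset$, once we are past the base case at least one cluster has size $\ge 2$; say $|U|\ge 2$ (the case $|W|\ge 2$ is symmetric, with the in-cluster playing the role of the out-cluster), and fix $v\in U$ with $v\neq u$. In the base case $|U|=|W|=1$ one simply forces every edge meeting $u$ or $w$ into its unique admissible state; a direct count as in the $k=1$ case of Lemma~\ref{L:equal1} shows the three models agree, the only thing to notice being that a possible $u$--$w$ edge is driven by the two cluster conditions into one and the same state and so contributes a single factor.

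For the inductive step I would delete $v$ and record $C_u(G\setminus v)=U_1$ for some $u\in U_1\subseteq U\setminus\{v\}$, noting that deleting $v$ does not disturb $C_w$, since $v\notin W$ and every $v$--$W$ edge is anyway a boundary edge of $W$ in its (closed, resp.\ suitably oriented) admissible state. This produces a recursion with three ingredients: the smaller two-cluster probability $P(C_u(G\setminus v)=U_1,\,C_w(G\setminus v)=W)$, supplied by the induction hypothesis; a reconnection factor saying that $v$ is tied to $U_1$ by at least one edge---read as ``at least one edge present'' in $E^p,D^p$ and ``at least one edge oriented $U_1\to v$'' in $O$, exactly as in Lemma~\ref{L:equal1}; and a single-cluster factor for the part of $U$ reachable only through $v$, which is itself an instance of Lemma~\ref{L:equal1} and hence already equal in all three models.

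The step that demands real care---and the \emph{only} genuine novelty beyond Lemma~\ref{L:equal1}---is the accounting of the edges between $U$ and $W$ and of those meeting the pivot $v$. The danger is double counting: writing $R=U\setminus(U_1\cup\{v\})$, the edges from $W$ to $R$ are already forced closed by the condition $C_w=W$ inside the first factor, so the single-cluster factor for $v$ must be taken in the graph $G\setminus(U_1\cup W)$, with $W$ removed, rather than in $G\setminus U_1$. The edges from $v$ to $W$ are then seen by no factor and must be reinstated as a separate factor in which each such edge is put into its admissible state (absent in $E^p$, a fixed orientation in $O$, the outgoing $v\to W$ copy absent in $D^p$). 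After this correction every edge of $G$ is constrained by exactly one factor; since each factor is interpreted in the three models by the very rule already validated in Lemma~\ref{L:equal1}, one and the same recursion evaluates $P_{E^{1/2}}$, $P_O$ and $P_{D^{1/2}}$ (and $P_{E^p}=P_{D^p}$ for arbitrary $p$), and the induction closes.

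In fact I would probably bypass the recursion entirely. The event $\{C_u=U,\,C_w=W\}$ is governed by three disjoint sets of edges---those internal to $U$, those internal to $W$, and those incident to $U\cup W$ but internal to neither (the $U$--$W$ edges together with the edges from $U$ or $W$ to the remaining vertices)---with all edges among the remaining vertices free. Hence its probability factors as $P(C_u(G[U])=U)\cdot P(C_w(G[W])=W)$ times $q^{m}$ (resp.\ $(1/2)^m$ in $O$), where $m$ counts the edges of the third set, each pinned to its single admissible state. The two internal factors are precisely Lemma~\ref{L:equal1} applied to the induced subgraphs $G[U]$ and $G[W]$---for the in-cluster $\vpil{C}_w$ one first reverses all orientations, a measure-preserving symmetry of $O$ and $D^p$---so all the asserted equalities drop out at once.
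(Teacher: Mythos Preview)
Your inductive argument is correct and mirrors the paper's proof almost exactly: the paper also inducts on $|U|+|W|$, handles the base case $|U|=|W|=1$ (noting the possible $u$--$w$ edge), picks $v\in U\setminus\{u\}$, and writes the same three-factor recursion with the single-cluster factor taken in $G\setminus(U_1\cup W)$. Your extra care about the $v$--$W$ edges is well placed---the paper's displayed recursion literally omits that factor---but since the omission is identical across all models the paper's conclusion is unaffected.

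Your second, direct-factorization argument is a genuinely different route the paper does not take. Once one observes that the event decomposes over three disjoint edge sets (internal to $U$, internal to $W$, and boundary), Lemma~\ref{L:equal1} applied to $G[U]$ and to $G[W]$ (via orientation reversal for the in-cluster) handles the two internal factors, and the boundary factor is an immediate $q^m$ (or $(1/2)^m$). This is cleaner and shorter; it bypasses the recursion entirely at the cost of invoking Lemma~\ref{L:equal1} as a black box. The paper's inductive approach, by contrast, re-runs the mechanism of Lemma~\ref{L:equal1} with the second cluster carried along, which is slightly heavier here but suggests how one might push to three or more clusters.
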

\begin{proof}
The proof is by induction over $|U|+|W|$, where for $|U|+|W|=2$ we get the probability $q^{\deg(u)+\deg(w)}$ (the exponent should be lowered by 1 if there is an edge between $u$ and $w$ in $G$) for all models.
The inductive step is carried out as in the proof of Lemma \ref{L:equal1}. With the same notation and assuming that there is a vertex  $v\in U\setminus \{u\}$ we get 
\begin{align*}P_{E^p}\Big(C_u(G)=U, C_w(G)=W\Big)= &
\sum_{U_1: u\in U_1\subseteq U\setminus \{v\}}P_{E^p}\Big(C_u(G\setminus \{v\})=U_1,C_w(G\setminus \{v\})=W\Big)\\
&\cdot (1-q^r)\cdot P_{E^p}\Big(C_v(G\setminus (U_1\cup W))=U\setminus U_1\Big).
\end{align*}
Again it is not difficult to see that we get the same recursion in all three models ($q=1/2$ in $O$). If $U=\{u\}$ then we use the similar recursion with $v\in W\setminus \{w\}$. Thus the statement follows by induction and by Lemma \ref{L:equal1}.
\end{proof}

\begin{remark}
If we tried to prove a similar lemma with $\hpil{C}_u=U,\hpil{C}_w=W$, we would get trouble at the boundary if there were an edge between $U$ and $W$. In that situation we could not direct the edge in the model $O$ in any way and we would need to consider both directed edges in the model $D^p$, so we would not get equality. However, if there were no edges between $U$ and $W$ in $G$ the corresponding equalities would be true.
\end{remark}

\begin{remark}
One could also consider the mixed model where the edges of  $G$ are split into two disjoint sets $E_1$ and $E_2$. Then every edge in $E_1$ is said to exist with probability $p_1$ independently of all other edges. Every edge in $E_2$ is given one of two possible directions with equal probability. The random semi-directed graph would be a natural generalization of bond percolation and random orientation for which Lemma \ref{L:equal1} and Lemma \ref{L:equal2} would be true for $p_1=1/2$.

We may also change the model so that every edge in $G$ belongs to set $E_1$ with probability $p'$ (independently of other edges) and otherwise to $E_2$. Then $p'=1$ gives model $E^{p_1}$, letting $p'=0$ gives the model $O$ and letting $p'=1-2p(1-p)$, $p_1=p^2/p'$ we get model $D^p$. Also in this model we would get the same probability distribution for $\hpil{C}_u$ for every $p'$ if we insist on $p_1=1/2$.
\end{remark}

%***CHECK POSSIBLE CONNECTION TO ORIENTED MATROIDS***

\section{Path correlations} \label{S:Paths}
In this section we will apply Lemma \ref{L:equal1} and Lemma \ref{L:equal2} to obtain correlation inequalities in the model $O$. The results are true also for the model $D^p$, but are already known to be true. Edge percolation on directed graphs is more closely connected to ordinary percolation and the most general result below, Theorem \ref{T:OrientedvdBHK}, is for the model $D^p$ a special case of Theorem 3.1 in \cite{vdBHK}. 

For edge percolation, $E^p$, the concept of increasing events is important. Here the total space of realizations is $\Omega=\{0,1\}^{E}$, where $E$ is the edge set of the graph. For elements $\omega,\omega'\in \Omega$, write $\omega\ge \omega'$ if $\omega(e)\ge \omega'(e)$ for all $e\in E$. An event $A$ is called increasing if $\omega\ge \omega'\in A$ implies $\omega\in A$. In combinatorics this is often called a monotone graph property and a very active research field in the last years has been the study of the simplicial complex of graphs that do not realize a certain monotone event. See e.g \cite{BBLSW} or \cite{JJ}. 

A fundamental tool is the classical Harris inequality from 1960. 
%For a wealth of interesting generalizations of Harris inequality see \cite{??}. 

\begin{theorem}\label{T:Harris}\cite{H} For edge percolation, two increasing events $A,B$ are always positively correlated, i.e. 
\[P(A)P(B)\le P(A,B).
\]
\end{theorem}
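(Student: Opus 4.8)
The plan is to prove the slightly stronger statement that $E[fg]\ge E[f]E[g]$ for any two \emph{increasing} functions $f,g\colon\Omega\to\mathbb{R}$, where a function is increasing if $\omega\ge\omega'$ implies $f(\omega)\ge f(\omega')$; applying this to the indicator functions $f=\mathbf 1_A$, $g=\mathbf 1_B$ then yields $P(A,B)=E[\mathbf 1_A\mathbf 1_B]\ge E[\mathbf 1_A]E[\mathbf 1_B]=P(A)P(B)$, which is exactly the claim. I would argue by induction on the number of edges $n=|E|$, exploiting that the percolation measure is a product measure over the coordinates $\omega(e)\in\{0,1\}$.

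For the base case $n=1$ the space is $\Omega=\{0,1\}$ and increasing means $f(1)\ge f(0)$, $g(1)\ge g(0)$. A direct computation gives $E[fg]-E[f]E[g]=p(1-p)\bigl(f(1)-f(0)\bigr)\bigl(g(1)-g(0)\bigr)$, which is nonnegative because each factor is. For the inductive step I would single out the last coordinate $\omega(e_n)$ and write $\omega=(\omega',\omega(e_n))$ with $\omega'\in\{0,1\}^{n-1}$. Conditioning on $\omega(e_n)$ and using the product structure, the maps $\omega'\mapsto f(\omega',\omega(e_n))$ and $\omega'\mapsto g(\omega',\omega(e_n))$ are increasing on $\{0,1\}^{n-1}$, so the induction hypothesis gives $E[fg\mid\omega(e_n)]\ge E[f\mid\omega(e_n)]\,E[g\mid\omega(e_n)]$ pointwise in $\omega(e_n)$. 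Setting $F(\omega(e_n))=E[f\mid\omega(e_n)]$ and $G(\omega(e_n))=E[g\mid\omega(e_n)]$, these are increasing functions of the single variable $\omega(e_n)$ because $f$ and $g$ are increasing in that coordinate; taking expectations and applying the base case to $F,G$ yields $E[fg]\ge E[FG]\ge E[F]E[G]=E[f]E[g]$, using the tower property for the outer equalities.

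The only genuinely delicate point—the main obstacle—is verifying that monotonicity is preserved under the conditioning, i.e. that $F$ and $G$ really are increasing in $\omega(e_n)$ and that the slices $f(\cdot,\omega(e_n))$ remain increasing in the first $n-1$ coordinates. Both hold because the measure on $\{0,1\}^{n-1}$ does not depend on $\omega(e_n)$, so $E[f\mid\omega(e_n)=1]\ge E[f\mid\omega(e_n)=0]$ follows by comparing integrands pointwise. For a locally finite $G$ with infinitely many edges one restricts to the finite set of edges on which $A$ and $B$ depend, or passes to the limit over an exhausting sequence of finite edge sets; since the events of interest in this note are determined by finitely many edges, the finite case suffices.
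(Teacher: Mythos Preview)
Your argument is the standard, correct proof of Harris' inequality via induction on the number of coordinates (often presented as the elementary case of the FKG inequality for product measures). The base-case computation and the inductive step are both fine; the only slightly loose remark is the last sentence about infinite edge sets---not every increasing event in percolation depends on only finitely many edges---but a martingale/limit argument or restricting to finite $G$ (which is all the paper needs) handles this.

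Note, however, that the paper does \emph{not} give its own proof of this theorem: Theorem~\ref{T:Harris} is simply quoted as a classical result with a citation to Harris' 1960 paper~\cite{H}, and is used as a black box. So there is no ``paper's proof'' to compare against; you have supplied a proof where the paper supplies only a reference.
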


For random orientations, the model $O$, this concept is not directly applicable (but often is for the model $D^p$).
 We will therefore define a related concept for the model $O$. The space of realizations for $O$ is better described as $\Omega_O=\{-1,1\}^{E}$, for some choice of $-1,1$ corresponding to the two different directions for every edge. Define an event $A$ to be {\bf $s$-out-cluster increasing} if for two realizations $\omega, \omega'\in \Omega_O$, $\hpil{C}_s(\omega)\supseteq \hpil{C}_s(\omega')$ and $\omega'\in A$ implies $\omega\in A$. Of course, one may define $s$-in-cluster increasing in the same way and all theorems below would be equally valid for $\vpil{C}_s$. A typical example of an $s$-out-cluster increasing event is $\{s\to a\}$, i.e. $a\in \hpil{C}_s$.
The corresponding definition for $E^p$ was considered in \cite{vdBHK} with the important difference that they thought of the cluster as a set of edges, whereas in this paper it is a set of vertices.

We may now formulate an inequality corresponding to Harris inequality.

\begin{theorem}\label{T:OrientedHarris} Given a vertex $s$ of the finite graph $G$. In the model $O$, two $s$ out-cluster increasing events $A,B$ are always positively correlated, i.e. 
\[ P_O(A)P_O(B)\le P_O(A, B).
\]
\end{theorem}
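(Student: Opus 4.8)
The plan is to transfer the statement to ordinary edge percolation via Lemma \ref{L:equal1} and then invoke Harris' inequality (Theorem \ref{T:Harris}). The crucial observation is that an $s$-out-cluster increasing event $A$ depends on a configuration $\omega\in\Omega_O$ only through the out-cluster $\hpil{C}_s(\omega)$: if $\hpil{C}_s(\omega)=\hpil{C}_s(\omega')$ then both containments $\hpil{C}_s(\omega)\supseteq\hpil{C}_s(\omega')$ and $\hpil{C}_s(\omega')\supseteq\hpil{C}_s(\omega)$ hold, so $\omega\in A\iff\omega'\in A$. Hence there is a family $\mathcal{A}$ of vertex sets containing $s$ with $A=\{\hpil{C}_s\in\mathcal{A}\}$, and the $s$-out-cluster increasing hypothesis says precisely that $\mathcal{A}$ is upward closed under inclusion among the realizable out-clusters (that is, $U\in\mathcal{A}$ and $U\subseteq U'$ force $U'\in\mathcal{A}$).

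First I would set up the matching percolation events. In the model $E^{1/2}$ on the same graph $G$, put $A^*=\{C_s\in\mathcal{A}\}$, and likewise $B^*=\{C_s\in\mathcal{B}\}$ for $B$ with its family $\mathcal{B}$. Since opening more edges can only enlarge the cluster, $\omega\ge\omega'$ implies $C_s(\omega)\supseteq C_s(\omega')$; combined with the upward closure of $\mathcal{A}$ this shows $A^*$ is an increasing event in the classical sense of Theorem \ref{T:Harris}, and similarly for $B^*$. (The realizable clusters in $E^{1/2}$ and the realizable out-clusters in $O$ coincide, both being the connected vertex sets containing $s$, so the two notions of upward closure agree; this is consistent with Lemma \ref{L:equal1}, which gives equal supports.)

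Next I would use the single-cluster identity $P_O(\hpil{C}_s=U)=P_{E^{1/2}}(C_s=U)$ of Lemma \ref{L:equal1} to match the three probabilities. Summing it over $U\in\mathcal{A}$ gives $P_O(A)=P_{E^{1/2}}(A^*)$, and summing over $U\in\mathcal{B}$ gives $P_O(B)=P_{E^{1/2}}(B^*)$. Because both $A$ and $B$ are determined by the same out-cluster $\hpil{C}_s$, so is $A\cap B$, with defining family $\mathcal{A}\cap\mathcal{B}$; summing over $U\in\mathcal{A}\cap\mathcal{B}$ yields $P_O(A,B)=P_{E^{1/2}}(A^*,B^*)$. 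Applying Harris' inequality to the increasing events $A^*,B^*$ then gives $P_{E^{1/2}}(A^*)P_{E^{1/2}}(B^*)\le P_{E^{1/2}}(A^*,B^*)$, which translates back to the desired $P_O(A)P_O(B)\le P_O(A,B)$.

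The only genuinely delicate point is the reduction in the first step: verifying that an $s$-out-cluster increasing event is a function of $\hpil{C}_s$ alone and that its defining family is upward closed, so that the translated event $A^*$ really is increasing for $E^{1/2}$. Once this is established the rest is routine. I would emphasize that only the single-cluster form of Lemma \ref{L:equal1} is needed here, not the two-cluster version of Lemma \ref{L:equal2}, since all three events $A$, $B$, and $A\cap B$ live on the one out-cluster around $s$.
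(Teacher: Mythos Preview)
Your proof is correct. Both you and the paper transfer to edge percolation $E^{1/2}$ via Lemma~\ref{L:equal1}, but you take a more direct route: after observing that an $s$-out-cluster increasing event is a function of $\hpil{C}_s$ alone with an upward-closed defining family, you translate $A,B$ to classical increasing events $A^*,B^*$ and apply Harris' inequality (Theorem~\ref{T:Harris}) directly. The paper instead first proves the more general Theorem~\ref{T:OrientedvdBHK} by appealing to the stronger Theorem~1.1 of \cite{vdBHK}, and then recovers Theorem~\ref{T:OrientedHarris} by specializing to $X=Y=\emptyset$. Your argument is more elementary and self-contained for this particular statement, needing only the 1960 Harris inequality rather than the conditional inequality of \cite{vdBHK}; the paper's route has the advantage that it establishes the more general conditional result along the way. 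The observation you flag as ``genuinely delicate''---that $s$-out-cluster increasing events factor through $\hpil{C}_s$---is essentially the same point the paper makes in the proof of Theorem~\ref{T:OrientedvdBHK} when it notes that vertex-cluster increasing events are also edge-cluster increasing.
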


Theorem \ref{T:OrientedHarris} will follow from the more general Theorem \ref{T:OrientedvdBHK} below, which is very closely inspired by and follows from Theorem 1.1 for $E^p$ in \cite{vdBHK}.

\begin{theorem}\label{T:OrientedvdBHK} Let $G=(V,E)$ be a finite graph and $s\in V(G)$. Let also $A,B$ be $s$-out-cluster increasing events in the model $O$. Then for any $X,Y\subseteq V\setminus \{s\}$,
\[P_O(A,\hpil{C}_s\cap X=\emptyset)P_O(B,\hpil{C}_s\cap Y=\emptyset)\le P_O(A,B,\hpil{C}_s\cap X\cap Y=\emptyset)P_O(\hpil{C}_s\cap (X\cup Y)=\emptyset).
\]
\end{theorem}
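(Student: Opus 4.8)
The plan is to transfer the whole inequality to edge percolation by means of Lemma \ref{L:equal1} and then quote the van den Berg--H\"aggstr\"om--Kahn inequality. The first thing I would observe is that \emph{every} event appearing in the statement is a function of the single random vertex set $\hpil{C}_s$. This is immediate for the cluster-avoidance events $\{\hpil{C}_s\cap X=\emptyset\}$, and it is exactly what the definition of $s$-out-cluster increasing gives for $A$ and $B$: if $\hpil{C}_s(\omega)=\hpil{C}_s(\omega')$ then applying the defining implication in both directions shows $\omega\in A\iff\omega'\in A$, so $A$ depends on $\omega$ only through $\hpil{C}_s$; and the implication itself says that the family $\mathcal{A}:=\{\hpil{C}_s(\omega):\omega\in A\}$ of vertex subsets containing $s$ is upward closed. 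Hence $A=\{\hpil{C}_s\in\mathcal{A}\}$ and $B=\{\hpil{C}_s\in\mathcal{B}\}$ with $\mathcal{A},\mathcal{B}$ up-sets, and the entire inequality is a statement about the \emph{law} of $\hpil{C}_s$ alone.

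Next I would invoke Lemma \ref{L:equal1}, which asserts that under $O$ the out-cluster $\hpil{C}_s$ has exactly the distribution of the (vertex) cluster $C_s$ under $E^{1/2}$. Since the inequality involves only this law, I may replace $\hpil{C}_s$ by $C_s$ and $P_O$ by $P_{E^{1/2}}$ throughout, reducing the claim to the corresponding inequality for the vertex cluster of $s$ in edge percolation at $p=1/2$,
\[
P_{E^{1/2}}(A, C_s\cap X=\emptyset)\,P_{E^{1/2}}(B, C_s\cap Y=\emptyset) \le P_{E^{1/2}}(A,B, C_s\cap X\cap Y=\emptyset)\,P_{E^{1/2}}(C_s\cap(X\cup Y)=\emptyset),
\]
where now $A=\{C_s\in\mathcal{A}\}$ and $B=\{C_s\in\mathcal{B}\}$.

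This last inequality is the vertex-cluster form of Theorem~1.1 of \cite{vdBHK}, and the only point needing care is the vertex-versus-edge discrepancy flagged just before the statement: in \cite{vdBHK} the cluster and the sets cut out are sets of \emph{edges}. I would bridge this by associating to a vertex set $Z\subseteq V\setminus\{s\}$ the edge set $Z^{\ast}:=\{e\in E: e\text{ meets }Z\}$. For $s\notin Z$ one has $\{C_s\cap Z=\emptyset\}=\{C_s^{E}\cap Z^{\ast}=\emptyset\}$, since a vertex $v\neq s$ lies in the cluster exactly when some incident edge does; moreover an event increasing in the vertex cluster is increasing in the edge cluster $C_s^{E}$, because the vertex cluster is a monotone function of $C_s^{E}$, so $A,B$ qualify as increasing cluster-events in the edge sense. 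Applying the edge form of \cite[Theorem~1.1]{vdBHK} with $S=X^{\ast}$ and $T=Y^{\ast}$ then produces the inequality but with $X^{\ast}\cap Y^{\ast}$ and $X^{\ast}\cup Y^{\ast}$ in the roles of $(X\cap Y)^{\ast}$ and $(X\cup Y)^{\ast}$.

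The step I expect to be the genuine obstacle is checking that this substitution does not corrupt the direction of the bound. Here $(X\cup Y)^{\ast}=X^{\ast}\cup Y^{\ast}$, so the final factor is unchanged; but for the mixed term only the inclusion $(X\cap Y)^{\ast}\subseteq X^{\ast}\cap Y^{\ast}$ holds, the gap being edges that join a vertex of $X\setminus Y$ to a vertex of $Y\setminus X$. Consequently $\{C_s^{E}\cap X^{\ast}\cap Y^{\ast}=\emptyset\}\subseteq\{C_s^{E}\cap(X\cap Y)^{\ast}=\emptyset\}$, so the right-hand factor delivered by the edge inequality is at most the one I want; the edge inequality is thus \emph{stronger} than the target, and the desired vertex inequality follows. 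The main work is exactly this monotonicity bookkeeping---confirming that the edge/vertex mismatch in the $X\cap Y$ term enlarges the right-hand side rather than shrinking it---after which Theorem~\ref{T:OrientedHarris} is recovered by taking $X=Y=\emptyset$.
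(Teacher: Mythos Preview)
Your proposal is correct and follows essentially the same route as the paper's proof: use Lemma~\ref{L:equal1} to replace the law of $\hpil{C}_s$ under $O$ by that of $C_s$ under $E^{1/2}$, observe that vertex-cluster-increasing implies edge-cluster-increasing, and then invoke Theorem~1.1 of \cite{vdBHK}. The paper is terser---it handles the edge/vertex mismatch only for $A$ and $B$ and writes the $E^p$ inequality directly with vertex sets $X,Y$---whereas you carry out the explicit translation $Z\mapsto Z^\ast$ and check the monotonicity in the $X\cap Y$ term; this extra bookkeeping is sound and arguably fills a small gap the paper leaves implicit.
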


\begin{proof}
By Lemma \ref{L:equal1} we get that $P_{E^{1/2}}(A,{C}_s\cap X=\emptyset)=P_{O}(A,\hpil{C}_s \cap X=\emptyset)$ and similarly for the other probabilities in the theorem. Theorem 1.1 in \cite{vdBHK} is the same statement but in $E^p$ with the important difference that they considered a cluster to be the edges that belonged to a path from $s$. However, if an event is $s$-cluster increasing in our sense then it is clearly also $s$-cluster increasing when the cluster is thought of a  set of edges since it is a strictly stronger property. Hence, 
\[P_{E^{p}}(A,{C}_s\cap X=\emptyset)P_{E^{p}}(B,{C}_s\cap Y=\emptyset)\le P_{E^{p}}(A,B,{C}_s\cap X\cap Y=\emptyset)P_{E^{p}}({C}_s\cap (X\cup Y)=\emptyset)
\]
and the theorem follows.
\end{proof}
We get Theorem \ref{T:OrientedHarris} from Theorem \ref{T:OrientedvdBHK} by setting $X=Y=\emptyset$.

\medskip
Let us now consider some examples of usage of these two theorems. Let $a,b,s,t$ be vertices of $G$.
We are interested in the event $\{s \to a\}$, which is the same thing as $\{a\in \hpil{C}_s(G)\}$.

\begin{corollary}\label{C:Twopaths} Consider the model for random orientation $O$. In any finite graph $G$ the events $\{a\in \hpil{C}_s(G)\}$ and $\{b\in \hpil{C}_s(G)\}$ are positively correlated, that is
\[P_O(a\in \hpil{C}_s(G))P_O(b\in \hpil{C}_s(G))\le P_O(a,b\in \hpil{C}_s(G)).
\]
\end{corollary}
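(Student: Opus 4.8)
The plan is to derive Corollary \ref{C:Twopaths} directly from Theorem \ref{T:OrientedHarris}. The two events in question are $A=\{a\in\hpil{C}_s(G)\}$ and $B=\{b\in\hpil{C}_s(G)\}$. First I would check that each of these is an $s$-out-cluster increasing event in the sense defined just before Theorem \ref{T:OrientedHarris}. This is immediate from the definition: if $\omega,\omega'\in\Omega_O$ satisfy $\hpil{C}_s(\omega)\supseteq\hpil{C}_s(\omega')$ and $\omega'\in A$, then $a\in\hpil{C}_s(\omega')\subseteq\hpil{C}_s(\omega)$, so $\omega\in A$; the same reasoning applies to $B$ with $b$ in place of $a$. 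Indeed the paper already flags $\{s\to a\}$ as a \emph{typical example} of such an event, so this verification is a one-line observation.

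Once both events are recognized as $s$-out-cluster increasing, Theorem \ref{T:OrientedHarris} applies verbatim and yields
\[
P_O(A)P_O(B)\le P_O(A,B),
\]
which, rewritten in cluster notation, is exactly
\[
P_O(a\in\hpil{C}_s(G))\,P_O(b\in\hpil{C}_s(G))\le P_O(a,b\in\hpil{C}_s(G)),
\]
the claimed inequality. So the entire corollary reduces to specializing the general positive-correlation theorem to this concrete pair of events.

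In short, there is essentially no obstacle here: the corollary is a direct instantiation, and the only content is confirming that membership of a fixed vertex in the out-cluster is a monotone (out-cluster increasing) event, which is structurally obvious since enlarging the out-cluster can only help such a membership condition hold. If one wanted a fully self-contained route that bypasses the abstract Harris-type theorem, the alternative would be to invoke Lemma \ref{L:equal1} to transfer the question to edge percolation $E^{1/2}$ — where $\{a\in C_s\}$ and $\{b\in C_s\}$ are increasing events — and apply the classical Harris inequality (Theorem \ref{T:Harris}) there, then pull the resulting inequality back through the distributional identity. Either way the mathematical work is minimal; the substance of the paper lies in Theorem \ref{T:OrientedHarris} and the lemmas of Section \ref{S:Lemmas}, not in this corollary.
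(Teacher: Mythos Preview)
Your proposal is correct and matches the paper's proof exactly: set $A=\{a\in \hpil{C}_s(G)\}$ and $B=\{b\in \hpil{C}_s(G)\}$ and apply Theorem~\ref{T:OrientedHarris}. The alternative route you sketch via Lemma~\ref{L:equal1} and classical Harris is also valid but is not what the paper does here.
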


\begin{proof}
Set $A=\{a\in \hpil{C}_s(G)\}$ and $B=\{b\in \hpil{C}_s(G)\}$ in the oriented version of Harris inequality, Theorem \ref{T:OrientedHarris}.
\end{proof}

Corollary \ref{C:Twopaths} is true also for the model $D^p$, but we do not need to refer to Theorem \ref{T:OrientedHarris} since it follows directly by Harris inequality.

In the beautiful paper \cite {vdBK}, Jacob van den Berg and Jeff Kahn prove that for any graph under the model $E^p$ the events $a\in {C}_s(G)$ and $b\in {C}_s(G)$ are positively correlated also if one first conditions on $t\notin C_s(G)$. This is far from intuitively clear, in fact the authors claim that it is non-intuitive \cite{vdBK}. The oriented version is the following.

\begin{corollary}\label{C:vdBK} Consider the model for random orientation $O$ and let $G$ be any graph and $a,b,s,t$ vertices of $G$, $s\neq t$. Conditioned on the event that $\{t\notin \hpil{C}_s\}$, the events $\{a\in \hpil{C}_s\}$ and $\{b\in \hpil{C}_s\}$ are positively correlated. In formulas:
\[P_{O}(a,b\in \hpil{C}_s | t\notin \hpil{C}_s )\ge P_{O}(a\in \hpil{C}_s | t\notin \hpil{C}_s )P_{O}(b\in \hpil{C}_s | t\notin \hpil{C}_s ).
\]
\end{corollary}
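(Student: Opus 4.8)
The plan is to transfer the statement from the random-orientation model $O$ to edge percolation $E^{1/2}$ using the lemmas of Section~\ref{S:Lemmas}, and then invoke the van den Berg--Kahn theorem on the percolation side. The crucial observation is that Corollary~\ref{C:vdBK} is a conditional correlation inequality, which upon clearing the conditioning is equivalent to the unconditioned inequality
\[
P_O(a,b\in \hpil{C}_s,\, t\notin \hpil{C}_s)\, P_O(t\notin \hpil{C}_s)
\ \ge\
P_O(a\in \hpil{C}_s,\, t\notin \hpil{C}_s)\, P_O(b\in \hpil{C}_s,\, t\notin \hpil{C}_s).
\]
So first I would rewrite the desired inequality in this product form, which removes the division by $P_O(t\notin \hpil{C}_s)$ and makes it amenable to the cluster-distribution transfer. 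Note we may assume $P_O(t\notin \hpil{C}_s)>0$, which holds since, for instance, the all-edges-out-of-neighbours-reversed orientation keeps $t$ out of reach of $s$ whenever $s\neq t$.

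The key step is to recognize the left-hand product structure as exactly the shape produced by Theorem~\ref{T:OrientedvdBHK}. Setting $A=\{a\in \hpil{C}_s\}$ and $B=\{b\in \hpil{C}_s\}$ — both of which are $s$-out-cluster increasing, as noted in the text — and choosing $X=Y=\{t\}$, the conclusion of Theorem~\ref{T:OrientedvdBHK} reads
\[
P_O(A,\hpil{C}_s\cap\{t\}=\emptyset)\,P_O(B,\hpil{C}_s\cap\{t\}=\emptyset)
\ \le\
P_O(A,B,\hpil{C}_s\cap\{t\}=\emptyset)\,P_O(\hpil{C}_s\cap\{t\}=\emptyset).
\]
Since $\hpil{C}_s\cap\{t\}=\emptyset$ is precisely the event $\{t\notin \hpil{C}_s\}$ (here $X\cap Y=X\cup Y=\{t\}$), this is exactly the product form displayed above. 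Dividing through by $P_O(t\notin \hpil{C}_s)^2$ then yields the conditional statement of the corollary.

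Thus the proof is essentially a one-line application of Theorem~\ref{T:OrientedvdBHK} with the right choice of $X$ and $Y$; the real content has already been packaged into that theorem, whose own proof pushes the work back through Lemma~\ref{L:equal1} to Theorem~1.1 of~\cite{vdBHK}. The main subtlety I would watch for is purely bookkeeping: verifying that $X\cap Y$ and $X\cup Y$ both collapse to $\{t\}$ when $X=Y=\{t\}$, so that all three conditioning events in Theorem~\ref{T:OrientedvdBHK} coincide with $\{t\notin \hpil{C}_s\}$, and confirming the conditioning probability is nonzero so the division is legitimate. There is no genuine analytic obstacle here, since the hard correlation inequality is imported wholesale; the only thing that could go wrong is misaligning the $X,Y$ specialization with the target events.
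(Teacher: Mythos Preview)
Your proposal is correct and matches the paper's own proof exactly: the paper also sets $A=\{a\in \hpil{C}_s\}$, $B=\{b\in \hpil{C}_s\}$, and $X=Y=\{t\}$ in Theorem~\ref{T:OrientedvdBHK}. Your additional remarks about clearing the conditioning and checking $P_O(t\notin \hpil{C}_s)>0$ are sound bookkeeping that the paper leaves implicit.
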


\begin{proof}
Set $A=\{a\in \hpil{C}_s(G)\}$ and $B=\{b\in \hpil{C}_s(G)\}$ and $X=Y=\{t\}$ in Theorem \ref{T:OrientedvdBHK}.
\end{proof}

The paper \cite{vdBHK} contains several other interesting theorems on correlation and association, most of which can be given versions for random orientations. We end with an example to illustrate the usage of Lemma \ref{L:equal2}.

\begin{theorem}\label{T:vdBHK2} Let $G$ be any graph and $a,b,s,t$ vertices of $G$, $s\neq t$. Conditioned on the event that $\{t\notin \hpil{C}_s\}$ the events $\{a\in \vpil{C}_t\}$ and $\{b\in \hpil{C}_s\}$ are negatively correlated. In formulas:
\[P_{O}(a\in \vpil{C}_t, b\in \hpil{C}_s | t\notin \hpil{C}_s )\le P_{O}(a\in \vpil{C}_t | t\notin \hpil{C}_s )P_{O}(b\in \hpil{C}_s | t\notin \hpil{C}_s ).
\]
\end{theorem}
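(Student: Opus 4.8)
The plan is to transfer the entire statement from the model $O$ to ordinary edge percolation $E^{1/2}$ by means of Lemma \ref{L:equal2}, and then to quote the corresponding negative correlation result for $E^{1/2}$ from \cite{vdBHK}. The crucial first observation is that the conditioning event is exactly the regime in which Lemma \ref{L:equal2} applies: if some vertex $v$ lies in $\hpil{C}_s\cap\vpil{C}_t$, then $s\to v$ and $v\to t$, so $t\in\hpil{C}_s$; conversely, $t\in\hpil{C}_s\cap\vpil{C}_t$ whenever $t\in\hpil{C}_s$, since $t\in\vpil{C}_t$ always. Hence
\[
\{t\notin\hpil{C}_s\}=\{\hpil{C}_s\cap\vpil{C}_t=\emptyset\},
\]
so on the conditioning event the out-cluster of $s$ and the in-cluster of $t$ are disjoint, which is precisely the hypothesis $U\cap W=\emptyset$ demanded by Lemma \ref{L:equal2}. (Note also that $P_O(t\notin\hpil{C}_s)\ge(1/2)^{\deg(s)}>0$ since $s\neq t$, so the conditioning is legitimate.)

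First I would clear denominators. Multiplying the desired conditional inequality through by $P_O(t\notin\hpil{C}_s)^2$, it becomes equivalent to
\[
P_O(a\in\vpil{C}_t, b\in\hpil{C}_s, t\notin\hpil{C}_s)\,P_O(t\notin\hpil{C}_s)\le P_O(a\in\vpil{C}_t, t\notin\hpil{C}_s)\,P_O(b\in\hpil{C}_s, t\notin\hpil{C}_s).
\]
Each factor here is a sum of the form $\sum P_O(\hpil{C}_s=U,\vpil{C}_t=W)$ over disjoint pairs $(U,W)$ with $s\in U$ and $t\in W$, subject to the additional constraints $a\in W$ and/or $b\in U$ according to which events are present. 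Applying Lemma \ref{L:equal2} term by term replaces each such summand by $P_{E^{1/2}}(C_s=U, C_t=W)$, thereby turning $\hpil{C}_s$ into $C_s$, $\vpil{C}_t$ into $C_t$, and the conditioning event $\{t\notin\hpil{C}_s\}$ into $\{C_s\cap C_t=\emptyset\}=\{t\notin C_s\}$.

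After this translation the inequality reads
\[
P_{E^{1/2}}(a\in C_t, b\in C_s, C_s\cap C_t=\emptyset)\,P_{E^{1/2}}(C_s\cap C_t=\emptyset)\le P_{E^{1/2}}(a\in C_t, C_s\cap C_t=\emptyset)\,P_{E^{1/2}}(b\in C_s, C_s\cap C_t=\emptyset),
\]
which is exactly the statement that, conditioned on $s$ not being connected to $t$, the events $\{a\in C_t\}$ and $\{b\in C_s\}$ are negatively correlated in $E^{1/2}$; this is the negative correlation theorem of \cite{vdBHK}, so the proof concludes by invoking it. I expect the only genuinely delicate point to be the bookkeeping in the middle step: one must verify that summing $P_{E^{1/2}}(C_s=U,C_t=W)$ over all disjoint admissible pairs really reconstitutes the event $\{C_s\cap C_t=\emptyset\}$ — here one uses that whenever $C_s$ and $C_t$ meet they coincide, so the disjoint pairs partition precisely $\{t\notin C_s\}$ — and that the vertex constraints $a\in W$ and $b\in U$ survive the term-by-term identification unchanged. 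Everything else is a direct citation.
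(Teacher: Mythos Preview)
Your proof is correct and follows essentially the same route as the paper's own argument: use Lemma \ref{L:equal2} to transfer the joint distribution of $(\hpil{C}_s,\vpil{C}_t)$ under disjointness to that of $(C_s,C_t)$ in $E^{1/2}$, then invoke the negative correlation inequality (formula (2)) of \cite{vdBHK}. Your version is simply more explicit about why the conditioning event $\{t\notin\hpil{C}_s\}$ coincides with $\{\hpil{C}_s\cap\vpil{C}_t=\emptyset\}$ and about the term-by-term bookkeeping, which the paper leaves implicit.
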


\begin{proof}
By Lemma \ref{L:equal2} we know that the joint distribution $P_{O}(\vpil{C}_t, \hpil{C}_s)$ is equal to 
$P_{E^p}({C}_t, {C}_s)$ as long as we assume the clusters to be disjoint, which is what is conditioned in the theorem. In \cite{vdBHK} formula (2) is 
$P_{E^p}(a\in {C}_t, b\in {C}_s | t\notin {C}_s )\le P_{E^p}(a\in {C}_t | t\notin {C}_s )P_{E^p}(b\in {C}_s | t\notin {C}_s )$ and the theorem follows.
\end{proof}

It is interesting to notice that if we do not condition on $s\nto t$, then $\{a\in \vpil{C}_t\}$ and $\{b\in \hpil{C}_s\}$ are positively correlated in the model $D^p$ by the ordinary Harris inequality, whereas in the model $O$ it will depend on the graph $G$ if they are positively or negatively correlated.

\end{document}